\numberwithin{equation}{section}
\theoremstyle{plain}
\newtheorem{thm}{Theorem}[section]
\newtheorem{cor}{Corollary}[section]
\theoremstyle{remark}
\newtheorem{rem}{Remark}[section]
\DeclareMathOperator{\td}{d}
\newcommand{\bell}{\textup{B}}
\begin{document}

\title[Expressions for Bell polynomials and derivatives]
{Explicit expressions for a family of Bell polynomials and derivatives of some functions}

\author[F. Qi]{Feng Qi}
\address[Qi]{Department of Mathematics, College of Science, Tianjin Polytechnic University, Tianjin City, 300387, China}
\email{\href{mailto: F. Qi <qifeng618@gmail.com>}{qifeng618@gmail.com}, \href{mailto: F. Qi <qifeng618@hotmail.com>}{qifeng618@hotmail.com}, \href{mailto: F. Qi <qifeng618@qq.com>}{qifeng618@qq.com}}
\urladdr{\url{http://qifeng618.wordpress.com}}

\author[M.-M. Zheng]{Miao-Miao Zheng}
\address[Zheng]{Department of Mathematics, College of Science, Tianjin Polytechnic University, Tianjin City, 300387, China}
\email{\href{mailto: M.-M. Zheng <zhengmm0715@gmail.com>}{zhengmm0715@gmail.com}, \href{mailto: M.-M. Zheng <zhengmm0715@outlook.com>}{zhengmm0715@outlook.com}, \href{mailto:M.-M. Zheng <miao.miao.zheng@qq.com>}{miao.miao.zheng@qq.com}}

\begin{abstract}
In the paper, the authors first inductively establish explicit formulas for derivatives of the arc sine function, then derive from these explicit formulas explicit expressions for a family of Bell polynomials related to the square function, and finally apply these explicit expressions to find explicit formulas for derivatives of some elementary functions.
\end{abstract}

\keywords{explicit formula; derivative; Bell polynomial; Fa\'a di Bruno formula; elementary function}

\subjclass[2010]{11B83, 26A06, 26A09, 26A24, 26C05, 26C99, 33B90}

\thanks{This paper was typeset using \AmS-\LaTeX}

\maketitle
\section{Introduction}
Throughout this paper, we denote the set of all positive integers by $\mathbb{N}$.
\par
It is general knowledge that the $n$-th derivatives of the sine and cosine functions for $n\in\mathbb{N}$ are
\begin{equation}\label{sin-cos-n-deriv}
\sin^{(n)}x=\sin\Bigl(x+\frac{\pi}{2}n\Bigr)\quad \text{and}\quad
\cos^{(n)}x=\cos\Bigl(x+\frac{\pi}{2}n\Bigr).
\end{equation}
\par
In~\cite{Tan-Cot-Bernulli-No.tex, derivative-tan-cot.tex}, among other things, the following explicit formulas for the $n$-th derivatives of the tangent and cotangent functions were inductively established:
\begin{multline}\label{tan-deriv-unity}
\tan^{(n)}x=\frac1{\cos^{n+1}x}\Biggl\{\frac12\alpha_{n,\frac{1+(-1)^n}2} \sin\biggl[\frac{1+(-1)^n}2x +\frac{1-(-1)^n}2\frac\pi2\biggr]\\
+\sum_{i=1}^{\frac12[n-1-\frac{1+(-1)^n}2]} \alpha_{n,2i+\frac{1+(-1)^n}2}\sin\biggl[\biggl(2i+\frac{1+(-1)^n}2\biggr)x +\frac{1-(-1)^n}2\frac\pi2\biggr]\Biggr\}
\end{multline}
and
\begin{multline}\label{cotan-deriv-n}
\cot^{(n)}x=\frac1{\sin^{n+1}x}\Biggl\{\frac12\beta_{n,\frac{1+(-1)^n}2} \cos\biggl[\frac{1+(-1)^n}2x\biggr]\\
+\sum_{i=1}^{\frac12[n-1-\frac{1+(-1)^n}2]} \beta_{n,2i+\frac{1+(-1)^n}2} \cos\biggl[\biggl(2i+\frac{1+(-1)^n}2\biggr)x\biggr]\Biggr\},
\end{multline}
where
\begin{equation}\label{a(p-q)-unify}
\alpha_{p,q}=(-1)^{\frac12[q-\frac{1+(-1)^p}2]}[1-(-1)^{p-q}]\sum_{\ell=0}^{\frac{p-q-1}2} (-1)^{\ell}\binom{p+1}{\ell}\biggl(\frac{p-q-1}2-\ell+1\biggr)^p
\end{equation}
and
\begin{equation}
\beta_{p,q}=(-1)^{\frac{1-(-1)^p}2}[1-(-1)^{p-q}]\sum_{\ell=0}^{\frac{p-q-1}2} (-1)^{\ell}\binom{p+1}{\ell}\biggl(\frac{p-q-1}2-\ell+1\biggr)^p
\end{equation}
for $p>q\geq0$. These formulas have been applied in~\cite{Tan-Cot-Bernulli-No.tex, singularity-combined.tex, polygamma-sigularity.tex}.
\par
In~\cite[Theorem~2]{exp-reciprocal-cm.tex} and its formally published paper~\cite[Theorem~2.2]{exp-reciprocal-cm-IJOPCM.tex}, the following explicit formula for the $n$-th derivative of the exponential function $e^{\pm1/t}$ was inductively obtained:
\begin{equation}\label{exp-frac1x-expans}
\bigl(e^{\pm1/t}\bigr)^{(n)}=(-1)^n\frac{e^{\pm1/t}}{t^{2n}} \sum_{k=0}^{n-1}(\pm1)^{n-k}L(n,n-k)t^{k},
\end{equation}
where
\begin{equation}\label{a-i-k-dfn}
L(n,k)=\binom{n-1}{k-1}\frac{n!}{k!}
\end{equation}
are called Lah numbers in combinatorics. By the way, Lah number $L(n,k)$ were discovered by Ivo Lah in 1955 and it counts the number of ways a set of $n$ elements can be partitioned into $k$ nonempty linearly ordered subsets. The formula~\eqref{exp-frac1x-expans} was also recovered in~\cite{DMST-MM-2013-Exp} and have been applied in~\cite{Lah-Num-Int-Prop-Corr.tex, notes-Stirl-No-JNT-rev.tex, Bell-Stirling-Lah-simp.tex, Lah-No-Identity.tex, Filomat-36-73-1.tex, Bessel-ineq-Dgree-CM.tex, QiBerg.tex, exp-psi-cm-revised.tex} respectively.
\par
In combinatorics, Bell polynomials of the second kind, or say, the partial Bell polynomials, denoted by $\bell_{n,k}(x_1,x_2,\dotsc,x_{n-k+1})$ for $n\ge k\ge1$, are defined by
\begin{equation}\label{bell-polyn-dfn}
\bell_{n,k}(x_1,x_2,\dotsc,x_{n-k+1})=\sum_{\substack{1\le i\le n,\ell_i\in\mathbb{N}\\ \sum_{i=1}^ni\ell_i=n\\ \sum_{i=1}^n\ell_i=k}}\frac{n!}{\prod_{i=1}^{n-k+1}\ell_i!} \prod_{i=1}^{n-k+1}\Bigl(\frac{x_i}{i!}\Bigr)^{\ell_i}.
\end{equation}
See~\cite[p.~134, Theorem~A]{Comtet-Combinatorics-74}.
The famous Fa\`a di Bruno formula may be described in terms of Bell polynomials of the second kind $\bell_{n,k}(x_1,x_2,\dotsc,x_{n-k+1})$ by
\begin{equation}\label{Bruno-Bell-Polynomial}
\frac{\td^n}{\td t^n}f\circ h(t)
=\sum_{k=1}^nf^{(k)}(h(t)) \bell_{n,k}\bigl(h'(t),h''(t),\dotsc,h^{(n-k+1)}(t)\bigr).
\end{equation}
See~\cite[p.~139, Theorem~C]{Comtet-Combinatorics-74}. This is an effective tool to compute the $n$-th derivatives of some composite functions. However, generally it is not an easy matter to explicitly find Bell polynomials $\bell_{n,k}$.
\par
In this paper, motivated by inductive deductions and extensive applications of the formulas~\eqref{tan-deriv-unity}, \eqref{cotan-deriv-n}, and~\eqref{exp-frac1x-expans}, we first inductively establish explicit formulas for the $n$-th derivatives of the functions $\arcsin x$ and $\arccos x$, then derive from these explicit formulas explicit expressions of Bell polynomials $\bell_{n,k}(x,1,\overbrace{0,\dotsc,0}^{n-k-1})$, and finally apply these explicit expressions to compute the $n$-th derivatives of some elementary funcitons involving the square function $x^2$.

\section{Explicit formulas for derivatives of $\arcsin x$ and $\arccos x$}

In this section, we will inductively establish explicit formulas for the $n$-th derivatives of the functions $\arcsin x$ and $\arccos x$. Essentially, we will find explicit formulas for the $n$-th derivatives of the function $\frac1{\sqrt{1-x^2}\,}$.

\begin{thm}\label{arcsine}
For $k\in\mathbb{N}$ and $x\in(-1,1)$, the $n$-th derivatives of the functions $\arcsin x$ and $\arccos x$ may be computed by
\begin{equation}\label{odd-n-deriv-arcsine}
\arcsin^{(2k-1)}x=-\arccos^{(2k-1)}x=\sum_{i=0}^{k-1}a_{2k-1,2i}\frac{x^{2i}}{(1-x^2)^{k+i-1/2}}
\end{equation}
and
\begin{equation}\label{even-n-deriv-arcsine}
\arcsin^{(2k)}x=-\arccos^{(2k)}x=\sum_{i=0}^{k-1}a_{2k,2i+1}\frac{x^{2i+1}}{(1-x^2)^{k+i+1/2}},
\end{equation}
where
\begin{align}\label{recur-1}
a_{2k-1,0}&={[(2k-3)!!]}^{2},\\
\label{recur-5}
a_{2k,1}&={[(2k-1)!!]}^{2},\\
\label{arcsin-driv-coeffi-1}
a_{k+1,k}&=(2k-1)!!,
\end{align}
and
\begin{equation}
\label{arcsin-driv-coeffi-n}
a_{m,k}=\frac{(m+k-2)!!(m-1)!}{2^{m-k-2}k!}
\end{equation}
for $m\ge k+2\ge3$.
\end{thm}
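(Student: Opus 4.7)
The plan is to prove both formulas simultaneously by induction on $n$. Since $\arcsin'(x)=-\arccos'(x)=(1-x^{2})^{-1/2}$, one has $\arcsin^{(n)}(x)=-\arccos^{(n)}(x)$ for every $n\ge 1$, so it is enough to treat $\arcsin^{(n)}(x)$. The two displayed formulas can be unified by noting that in each term the power of $(1-x^{2})$ in the denominator is $(n+j)/2$, where $j$ has parity opposite to $n$ and ranges over $0\le j\le n-1$; both assertions then read
\[
\arcsin^{(n)}(x)=\sum_{j}a_{n,j}\frac{x^{j}}{(1-x^{2})^{(n+j)/2}}.
\]

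Differentiating a single term produces
\[
\frac{\td}{\td x}\frac{x^{j}}{(1-x^{2})^{(n+j)/2}}=\frac{jx^{j-1}}{(1-x^{2})^{(n+j)/2}}+\frac{(n+j)x^{j+1}}{(1-x^{2})^{(n+j+2)/2}},
\]
and because $(n+j)/2=((n+1)+(j-1))/2$ and $(n+j+2)/2=((n+1)+(j+1))/2$, both summands are already in the shape required for the $(n+1)$-th derivative. Collecting coefficients yields the linear recursion
\[
a_{n+1,j'}=(j'+1)\,a_{n,j'+1}+(n+j'-1)\,a_{n,j'-1},
\]
with the convention that $a_{n,j}=0$ outside the admissible range. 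The base case $n=1$ matches $a_{1,0}=1=[(-1)!!]^{2}$, and the inductive step reduces to verifying that the four closed-form expressions in the statement are consistent with this recursion.

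That verification splits according to which of $a_{n,j'\pm 1}$ sits at the boundary. The left-end identities $a_{2k-1,0}=a_{2k-2,1}$ and $a_{2k,1}=2\,a_{2k-1,2}+(2k-1)\,a_{2k-1,0}$, together with the diagonal identity $a_{k+2,k+1}=(2k+1)\,a_{k+1,k}$, each collapse to a trivial double-factorial manipulation. The main obstacle is the purely interior case: one must show that the closed form $a_{m,k}=(m+k-2)!!(m-1)!/[2^{m-k-2}k!]$ satisfies $a_{m,k}=(k+1)\,a_{m-1,k+1}+(m+k-2)\,a_{m-1,k-1}$, together with the hybrid sub-cases in which $a_{m-1,k+1}$ is on the diagonal (so one substitutes $(2k+1)!!$ rather than the general formula) or $a_{m-1,k-1}$ is a left-end entry (substituted by the corresponding $[(2\ell-1)!!]^{2}$). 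Each of these reduces to an elementary double-factorial identity after the conversion $(2\ell)!=2^{\ell}\ell!\,(2\ell-1)!!$, but juggling the double factorials, the parity conditions on $m$ and $k$, and the several sub-cases is where the bookkeeping is most delicate; this is the step I would allocate the bulk of the careful calculation to.
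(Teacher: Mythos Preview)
Your strategy---derive the two-term recursion
\[
a_{n+1,j}=(j+1)\,a_{n,j+1}+(n+j-1)\,a_{n,j-1}
\]
by differentiating a generic term, and then \emph{verify} that the announced closed forms satisfy it---is sound and in fact more direct than the paper's route. The paper carries out the same differentiation to reach the same recursion, but then \emph{derives} the closed form by explicitly computing $a_{k+1,k}$, $a_{k+3,k}$, $a_{k+5,k}$ and extrapolating, with the general $a_{k+\ell,k}$ asserted only via an informal ``similarly, by induction''. Your verification plan would, in principle, replace that hand-wave by a clean algebraic check.

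There is, however, a genuine obstruction precisely at the step you flagged as delicate: the interior identity
\[
a_{m,k}=(k+1)\,a_{m-1,k+1}+(m+k-2)\,a_{m-1,k-1}
\]
\emph{fails} for the closed form $a_{m,k}=\dfrac{(m+k-2)!!\,(m-1)!}{2^{\,m-k-2}\,k!}$ once $m-k\ge 7$. If you substitute this expression on both sides and simplify, the right side carries a factor $k+4$ where the left side has $m-1$; these coincide only when $m=k+5$. Concretely, iterating the (correct) recursion from $a_{1,0}=1$ gives $a_{8,1}=2a_{7,2}+7a_{7,0}=11025=[7!!]^{2}$, whereas the stated formula returns $7!!\cdot 7!/2^{5}=16537.5$, not even an integer. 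The reason the paper's pattern looks right is that $2^{\,m-k-2}$ happens to equal $(m-k-1)!!$ for $m-k\in\{3,5\}$, the only cases it computed before generalising. The corrected closed form is
\[
a_{m,k}=\frac{(m+k-2)!!\,(m-1)!}{(m-k-1)!!\,k!}\qquad (m\ge k+1,\ m-k\text{ odd}),
\]
which also subsumes the three boundary formulas for $a_{2k-1,0}$, $a_{2k,1}$, $a_{k+1,k}$; with this in hand your verification goes through in every sub-case (the interior check reduces to $(m-k-1)+k=m-1$). So your method is right; the formula you were asked to confirm is not.
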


\begin{proof}
It is easy to obtain that
\begin{equation*}
(\arcsin x)'=\frac1{(1-x^2)^{1/2}}\quad \text{and}\quad
(\arcsin{x})''=\frac{x}{(1-x^2)^{3/2}}.
\end{equation*}
This means the special case $k=1$ in~\eqref{recur-1} and~\eqref{arcsin-driv-coeffi-1}.
Therefore, the formulas~\eqref{odd-n-deriv-arcsine} and~\eqref{even-n-deriv-arcsine} are valid for $k=1$.
\par
Assume that the formulas~\eqref{odd-n-deriv-arcsine} and~\eqref{even-n-deriv-arcsine} are valid for $k>1$. By this inductive hypothesis and a direct differentiation, we have
\begin{multline*}
\arcsin^{(2k)}x=\bigl[\arcsin^{(2k-1)}x\bigr]'
=\Biggl[\sum_{i=0}^{k-1}a_{2k-1,2i}\frac{x^{2i}}{(1-x^2)^{k+i-1/2}}\Biggr]'\\
\begin{aligned}
&=\sum_{i=0}^{k-1}a_{2k-1,2i}\Biggl(\frac{(x^{2i})'(1-x^2)^{k+i-1/2}-x^{2i}[(1-x^2)^{k+i-1/2}]'} {(1-x^2)^{2(k+i-1/2)}}\Biggr)\\
&=\sum_{i=0}^{k-1}a_{2k-1,2i}\biggl[\frac{2ix^{2i-1}(1-x^2)^{k+i-1/2} +2(k+i-1/2)x^{2i+1}(1-x^2)^{k+i-3/2}} {(1-x^2)^{2(k+i-1/2)}}\biggr]\\
&=\sum_{i=0}^{k-1}\biggl[a_{2k-1,2i}\frac{2ix^{2i-1}}{(1-x^2)^{k+i-1/2}}+a_{2k-1,2i}\frac{
2(k+i-1/2)x^{2i+1}}{(1-x^2)^{k+i+1/2}}\biggr]\\
&=\sum_{i=0}^{k-2}a_{2k-1,2(i+1)}\frac{2(i+1)x^{2i+1}}{(1-x^2)^{k+i+1/2}}
+\sum_{i=0}^{k-1}a_{2k-1,2i}\frac{2(k+i-1/2)x^{2i+1}}{(1-x^2)^{k+i+1/2}}\\
&=\sum_{i=0}^{k-2}\biggl[2(i+1)a_{2k-1,2(i+1)} +2\biggl(k+i-\frac12\biggr)a_{2k-1,2i}\biggr] \frac{x^{2i+1}}{(1-x^2)^{k+i+1/2}}\\
&\quad +2\biggl(2k-\frac32\biggr)a_{2k-1,2k-2}\frac{x^{2k-1}}{(1-x^2)^{2k-1/2}}
\end{aligned}
\end{multline*}
and
\begin{multline*}
\arcsin^{(2k+1)}x=\bigl[\arcsin^{(2k)}x\bigr]'
=\Biggl[\sum_{i=0}^{k-1}a_{2k,2i+1}\frac{x^{2i+1}}{(1-x^2)^{k+i+1/2}}\Biggr]'\\
\begin{aligned}
&=\sum_{i=0}^{k-1}a_{2k,2i+1}\biggl(\frac{(x^{2i+1})'(1-x^2)^{k+i+1/2}-x^{2i+1}[(1-x^2)^{k+i+1/2}]'} {(1-x^2)^{2(k+i+1/2)}}\biggr)\\
&=\sum_{i=0}^{k-1}a_{2k,2i+1}\biggl[\frac{(2i+1)x^{2i}(1-x^2)^{k+i+1/2} +2(k+i+1/2)x^{2i+2}(1-x^2)^{k+i-1/2}} {(1-x^2)^{2(k+i+1/2)}}\biggr]\\
&=\sum_{i=0}^{k-1}\biggl[a_{2k,2i+1}\frac{(2i+1)x^{2i}}{(1-x^2)^{k+i+1/2}}+a_{2k,2i+1}\frac{
2(k+i+1/2)x^{2i+2}}{(1-x^2)^{k+i+3/2}}\biggr]
\end{aligned}\\
\begin{aligned}
&=\sum_{i=0}^{k-1}a_{2k,2i+1}\frac{(2i+1)x^{2i}}{(1-x^2)^{k+i+1/2}} +\sum_{i=1}^{k}a_{2k,2i-1}\frac{2(k+i-1/2)x^{2i}}{(1-x^2)^{k+i+1/2}}\\
&=\sum_{i=1}^{k-1}\biggl[(2i+1)a_{2k,2i+1}+2\biggl(k+i-\frac12\biggr)a_{2k,2i-1}\biggr] \frac{x^{2i}}{(1-x^2)^{k+i+1/2}}\\
&\quad+a_{2k,1}\frac1{(1-x^2)^{k+1/2}} +2\biggl(2k-\frac12\biggr)a_{2k,2k-1}\frac{x^{2k}}{(1-x^2)^{2k+1/2}}.
\end{aligned}
\end{multline*}
Comparing the above two formulas with
\begin{equation*}
\sum_{i=0}^{k-1}a_{2k,2i+1}\frac{x^{2i+1}}{(1-x^2)^{k+i+1/2}} \quad \text{and}\quad \sum_{i=0}^{k}a_{2k+1,2i}\frac{x^{2i}}{(1-x^2)^{k+i+1/2}}
\end{equation*}
respectively yields the recursion formulas
\begin{align}\label{recursion-coefficients-a}
a_{2k,2k-1}&=(4k-3)a_{2k-1,2k-2},\\
a_{2k,2i+1}&=2(i+1)a_{2k-1,2(i+1)}+(2k+2i-1)a_{2k-1,2i}\label{recur-2}
\end{align}
for $0\le{i}<k-1$, and
\begin{align}\label{recursion-formulas-coefficients-b}
a_{2k+1,0}&=a_{2k,1} ,\\
a_{2k+1,2k}&=(4k-1)a_{2k,2k-1},\label{recur-3}\\
a_{2k+1,2i}&=(2i+1)a_{2k,2i+1}+(2k+2i-1)a_{2k,2i-1}\label{recur-4}
\end{align}
for $1\le i\le k-1$.
\par
From~\eqref{recur-1}, \eqref{recursion-coefficients-a}, and~\eqref{recur-3}, it is easy to derive that
\begin{equation*}
a_{2k+1,2k}=(4k-1)!!\quad \text{and}\quad a_{2k,2k-1}=(4k-3)!!,
\end{equation*}
which may be unified into~\eqref{arcsin-driv-coeffi-1} for $k\ge2$.
\par
From
\begin{gather*}
a_{3,0}=a_{2,1}=1=(1!!)^2,\quad
a_{5,0}=a_{4,1}=9=(3!!)^2,\\
a_{7,0}=a_{6,1}=225=(5!!)^2,\quad
a_{9,0}=a_{8,1}=11025=(7!!)^2,
\end{gather*}
it is not difficult to inductively conclude~\eqref{recur-5}.
\par
Letting $i=k-2$ and $i=k-1$ for $k\ge2$ in~\eqref{recur-2} and~\eqref{recur-4} respectively yields
\begin{equation*}
a_{2k,2k-3}=2(k-1)a_{2k-1,2k-2}+(4k-5)a_{2k-1,2k-4}
\end{equation*}
and
\begin{equation*}
a_{2k+1,2k-2}=(2k-1)a_{2k,2k-1}+(4k-3)a_{2k,2k-3}.
\end{equation*}
Combining these two recurrence formulas with~\eqref{recur-5} and~\eqref{arcsin-driv-coeffi-1} and recurring give
\begin{equation}\label{arcsin-driv-coeffi-3}
a_{k+3,k}=(2k+1)!!\sum_{\ell=1}^{k+1}\ell=(2k+1)!!\frac{(k+1)(k+2)}{2}, \quad k\ge0.
\end{equation}
\par
Taking $i=k-3$ and $i=k-2$ for $k\ge3$ in~\eqref{recur-2} and~\eqref{recur-4} respectively yields
\begin{equation*}
a_{2k,2k-5}=2(k-2)a_{2k-1,2k-4}+(4k-7)a_{2k-1,2k-6}
\end{equation*}
and
\begin{equation*}
a_{2k+1,2k-4}=(2k-3)a_{2k,2k-3}+(4k-5)a_{2k,2k-5}.
\end{equation*}
Combining these two recurrence formulas with~\eqref{recur-5} and~\eqref{arcsin-driv-coeffi-1} and recurring give
\begin{equation}\label{arcsin-driv-coeffi-5}
\begin{split}
a_{k+5,k}&=(2k+3)!!\sum_{\ell=1}^{k+1}\frac{\ell(\ell+1)(\ell+2)}{2}\\
&=(2k+3)!!\frac{(k+1) (k+2) (k+3) (k+4)}{8}
\end{split}
\end{equation}
for $k\ge0$.
\par
Similarly as above, by induction, we obtain
\begin{equation}\label{arcsin-driv-coeffi-k}
a_{k+\ell,k}=\frac{(2k+\ell-2)!!}{2^{\ell-2}}\prod_{i=1}^{\ell-1}{(k+i)}
=\frac{(2k+\ell-2)!!}{2^{\ell-2}}\frac{(k+\ell-1)!}{k!}, \quad \ell\ge2.
\end{equation}
Letting $\ell=m-k$ in~\eqref{arcsin-driv-coeffi-k} leads to~\eqref{arcsin-driv-coeffi-n}.
The proof of Theorem~\ref{arcsine} is complete.
\end{proof}

The formulas~\eqref{odd-n-deriv-arcsine} and~\eqref{even-n-deriv-arcsine} may be straightforwardly unified as the following corollary.

\begin{cor}
For $n\in\mathbb{N}$ and $x\in(-1,1)$, the $n$-th derivatives of the functions $\arcsin x$ and $\arccos x$ may be computed by
\begin{equation}
\begin{split}\label{unified-formula}
\arcsin^{(n)}x&=-\arccos^{(n)}x\\
&=\sum_{i=0}^{\frac12[n+\frac{1-(-1)^n}2]-1}a_{n,2i+\frac{1+(-1)^n}2} \frac{x^{2i+\frac{1+(-1)^n}2}}{(1-x^2)^{i+\frac12[n+\frac{1-(-1)^n}2]+\frac{(-1)^n}2}},
\end{split}
\end{equation}
where $a_{n,2i+\frac{1+(-1)^n}2}$ are defined by~\eqref{recur-1}, \eqref{recur-5}, \eqref{arcsin-driv-coeffi-1}, and~\eqref{arcsin-driv-coeffi-n}.
\end{cor}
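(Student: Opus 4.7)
The plan is to prove the corollary by a direct case split on the parity of $n$, verifying that the unified formula~\eqref{unified-formula} specializes, in each case, to one of the two formulas~\eqref{odd-n-deriv-arcsine} and~\eqref{even-n-deriv-arcsine} already established in Theorem~\ref{arcsine}. No new analytic content is needed; the task is purely bookkeeping with the parity indicators $\frac{1\pm(-1)^n}{2}$.

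First I would write $n=2k-1$ with $k\in\mathbb{N}$ and substitute into~\eqref{unified-formula}. Then $(-1)^n=-1$, so $\frac{1+(-1)^n}{2}=0$ and $\frac{1-(-1)^n}{2}=1$. The upper summation index becomes $\frac{1}{2}[(2k-1)+1]-1=k-1$, the power of $x$ in the numerator becomes $x^{2i}$, and the exponent on $(1-x^2)$ in the denominator becomes $i+\frac{1}{2}[2k]+\frac{-1}{2}=k+i-\frac{1}{2}$. This reproduces~\eqref{odd-n-deriv-arcsine} exactly.

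Next I would write $n=2k$ and substitute. Now $(-1)^n=1$, so $\frac{1+(-1)^n}{2}=1$ and $\frac{1-(-1)^n}{2}=0$. The upper summation index becomes $\frac{1}{2}(2k)-1=k-1$, the numerator power becomes $x^{2i+1}$, and the denominator exponent becomes $i+\frac{1}{2}(2k)+\frac{1}{2}=k+i+\frac{1}{2}$. This reproduces~\eqref{even-n-deriv-arcsine} exactly. Since the coefficients $a_{n,2i+\frac{1+(-1)^n}{2}}$ are by definition $a_{2k-1,2i}$ in the odd case and $a_{2k,2i+1}$ in the even case, matching the coefficients demanded by~\eqref{odd-n-deriv-arcsine} and~\eqref{even-n-deriv-arcsine}, the two subcases together cover every $n\in\mathbb{N}$.

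There is essentially no obstacle: the only thing to be careful about is arithmetic with the four occurrences of $\frac{1\pm(-1)^n}{2}$ so that the exponents in the numerator and denominator come out correctly, and verifying that the upper limit of summation agrees in both parities. The equality $\arcsin^{(n)}x=-\arccos^{(n)}x$ requires no separate argument since it has already been established in Theorem~\ref{arcsine} for each parity.
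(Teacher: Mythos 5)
Your proof is correct and is exactly the verification the paper has in mind: the paper offers no separate proof, simply asserting that the two parity formulas of Theorem~\ref{arcsine} "may be straightforwardly unified" into~\eqref{unified-formula}, and your case-by-case substitution (with all exponents and summation limits checked correctly) supplies precisely that bookkeeping.
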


From the formula~\eqref{unified-formula}, we may derive the $n$-th derivative of the elementary function $\frac1{\sqrt{1-x^2}\,}$ as follows.

\begin{cor}\label{cor-2.2-arcsin}
For $n\in\mathbb{N}$ and $x\in(-1,1)$, the $n$-th derivatives of the function $\frac1{\sqrt{1-x^2}\,}$ may be computed by
\begin{equation}\label{sqrt-(1-x-square)}
\biggl(\frac1{\sqrt{1-x^2}\,}\biggr)^{(n)}
=\sum_{k=0}^{\frac12[n-\frac{1-(-1)^{n}}2]}a_{n+1,2k+\frac{1-(-1)^{n}}2} \frac{x^{2k+\frac{1-(-1)^{n}}2}}{(1-x^2)^{k+\frac12[n+\frac{1+(-1)^{n}}2]+\frac{1-(-1)^{n}}2}},
\end{equation}
where $a_{n,2k+\frac{1+(-1)^n}2}$ are defined by~\eqref{recur-1}, \eqref{recur-5}, \eqref{arcsin-driv-coeffi-1}, and~\eqref{arcsin-driv-coeffi-n}.
\end{cor}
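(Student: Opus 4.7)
The plan is to reduce the statement to the unified formula~\eqref{unified-formula} of the preceding corollary via the elementary identity
\begin{equation*}
\frac{1}{\sqrt{1-x^2}\,}=(\arcsin x)',
\end{equation*}
which, upon $n$ further differentiations, gives
\begin{equation*}
\biggl(\frac{1}{\sqrt{1-x^2}\,}\biggr)^{(n)}=\arcsin^{(n+1)}x.
\end{equation*}
Hence my approach is to specialise~\eqref{unified-formula} at $n+1$ in place of $n$ and rewrite the resulting parity-dependent exponents back in terms of $n$.

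The main tool will be the parity identity $(-1)^{n+1}=-(-1)^n$, which interchanges the two factors $\tfrac{1+(-1)^n}{2}$ and $\tfrac{1-(-1)^n}{2}$ and negates $\tfrac{(-1)^n}{2}$. I would apply this to each of the four parity-dependent expressions obtained from~\eqref{unified-formula} after the shift $n\mapsto n+1$, namely the upper summation limit, the exponent of $x$ in the numerator, the second subscript of the coefficient $a$, and the exponent of $1-x^2$ in the denominator. The only step that is not immediately visual is the collapse
\begin{equation*}
\tfrac12\Bigl[(n+1)+\tfrac{1+(-1)^n}{2}\Bigr]-\tfrac{(-1)^n}{2}=\tfrac12\Bigl[n+\tfrac{1+(-1)^n}{2}\Bigr]+\tfrac{1-(-1)^n}{2}
\end{equation*}
for the denominator exponent, which I would verify by a one-line algebraic check; the analogous check for the upper summation index reduces both sides to $\tfrac{2n-1+(-1)^n}{4}$.

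Because the whole argument is a cosmetic reindexing, no real obstacle is expected. The only thing to watch for is careful bookkeeping of the parity factors; to be safe I would separate the cases $n$ even and $n$ odd (which, after the shift, correspond to the odd- and even-order derivatives of $\arcsin$ in~\eqref{odd-n-deriv-arcsine} and~\eqref{even-n-deriv-arcsine} respectively) as a sanity check that both the summation ranges and the exponents match the claim~\eqref{sqrt-(1-x-square)}.
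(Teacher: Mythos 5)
Your proposal is correct and is exactly the route the paper intends: the corollary is stated as following from \eqref{unified-formula}, and the intended derivation is precisely the observation that $\bigl(1/\sqrt{1-x^2}\,\bigr)^{(n)}=\arcsin^{(n+1)}x$ followed by the substitution $n\mapsto n+1$ and the parity bookkeeping $(-1)^{n+1}=-(-1)^n$. Your algebraic checks of the summation limit and the denominator exponent are both accurate, so nothing is missing.
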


\section{Explicit expressions of Bell polynomials}

In this section, by virtue of Corollary~\ref{cor-2.2-arcsin}, we will derive explicit expressions of Bell polynomials $\bell_{n,k}(x,1,\overbrace{0,\dotsc,0}^{n-k-1})$.

\begin{thm}\label{Bell-exp-thm}
For $n\in\mathbb{N}$, Bell polynomials $\bell_{n,k}$ satisfy
\begin{gather}\label{k=frac-n2}
\bell_{2n-1,n-1}(x,1,\overbrace{0,\dotsc,0}^{n-1})=0,\\
\label{bell-no-0-1-0-one}
\bell_{2n,n}(x,1,\overbrace{0,\dotsc,0}^{n-1})=(2n-1)!!,\\
\label{Bell-eq-1}
\bell_{n,k}(x,1,\overbrace{0,\dotsc,0}^{n-k-1})=0, \quad 1\le k<\frac12\biggl[n-\frac{1-(-1)^n}2\biggr],\\
\label{Bell-eq-unified}
\bell_{n,k}(x,1,\overbrace{0,\dotsc,0}^{n-k-1})
=\frac{a_{n+1,2k-n}}{(2k-1)!!}{x^{2k-n}}, \quad n\ge k>\frac12\biggl[n-\frac{1-(-1)^n}2\biggr],
\end{gather}
where $a_{n,k}$ are defined by~\eqref{recur-1}, \eqref{recur-5}, \eqref{arcsin-driv-coeffi-1}, and~\eqref{arcsin-driv-coeffi-n}.
\end{thm}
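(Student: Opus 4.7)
The plan is to compute $\bigl(\frac{1}{\sqrt{1-x^2}\,}\bigr)^{(n)}$ in two different ways and match the results term-by-term. Corollary~\ref{cor-2.2-arcsin} already supplies one closed form in terms of the coefficients $a_{n+1,\cdot}$; the other I will obtain by applying the Fa\`a di Bruno formula~\eqref{Bruno-Bell-Polynomial} to the composition $\frac{1}{\sqrt{1-x^2}\,}=f\circ h$ with $f(u)=(1-u)^{-1/2}$ and $h(x)=x^2$. Matching the two expansions yields~\eqref{Bell-eq-unified}; the vanishing identities~\eqref{k=frac-n2} and~\eqref{Bell-eq-1} and the closed form~\eqref{bell-no-0-1-0-one} will drop out either from the same matching or directly from the combinatorial definition~\eqref{bell-polyn-dfn}.

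On the Fa\`a di Bruno side, since $h'(x)=2x$, $h''(x)=2$, and $h^{(i)}(x)=0$ for $i\ge 3$, only the Bell polynomials $\bell_{n,k}(2x,2,0,\dots,0)$ appear. A short induction on $k$ yields $f^{(k)}(u)=\frac{(2k-1)!!}{2^k}(1-u)^{-k-1/2}$, and the standard homogeneity property of partial Bell polynomials $\bell_{n,k}(ab\,x_1,ab^2x_2,\dots,ab^{n-k+1}x_{n-k+1})=a^kb^n\bell_{n,k}(x_1,\dots,x_{n-k+1})$, applied with $a=2$ and $b=1$, reduces $\bell_{n,k}(2x,2,0,\dots,0)=2^k\bell_{n,k}(x,1,0,\dots,0)$. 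Combining these two ingredients gives
\begin{equation*}
\biggl(\frac{1}{\sqrt{1-x^2}\,}\biggr)^{(n)}=\sum_{k=1}^{n}\frac{(2k-1)!!\,\bell_{n,k}(x,1,0,\dots,0)}{(1-x^2)^{k+1/2}}.
\end{equation*}
Because both this sum and the expansion~\eqref{sqrt-(1-x-square)} from Corollary~\ref{cor-2.2-arcsin} are finite sums of monomials of the form $x^p/(1-x^2)^{q+1/2}$ with pairwise distinct $(p,q)$, the identification of coefficients is forced and unambiguous.

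Matching the coefficient of $x^{2k-n}/(1-x^2)^{k+1/2}$ then produces $(2k-1)!!\,\bell_{n,k}(x,1,0,\dots,0)=a_{n+1,2k-n}\,x^{2k-n}$, which is~\eqref{Bell-eq-unified}; the Fa\`a di Bruno indices $k$ with $2k<n$ carry no counterpart on the right-hand side of~\eqref{sqrt-(1-x-square)}, forcing the corresponding $\bell_{n,k}(x,1,0,\dots,0)=0$ and yielding both~\eqref{Bell-eq-1} and its borderline instance~\eqref{k=frac-n2}. Equation~\eqref{bell-no-0-1-0-one} I would read off directly from the definition~\eqref{bell-polyn-dfn}: the constraints $\sum i\ell_i=2n$ and $\sum\ell_i=n$ combined with $x_i=0$ for $i\ge 3$ force $\ell_1=0$ and $\ell_2=n$, so $\bell_{2n,n}(x,1,0,\dots,0)=\frac{(2n)!}{n!\,2^n}=(2n-1)!!$. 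The chief technical obstacle is the bookkeeping in the matching step: the parity-indicator notation in~\eqref{sqrt-(1-x-square)} must be unpacked so that the $k$-th Fa\`a di Bruno term can be paired with the unique term of~\eqref{sqrt-(1-x-square)} having denominator $(1-x^2)^{k+1/2}$. I would handle this by splitting into the cases $n$ even and $n$ odd, shifting the summation index of~\eqref{sqrt-(1-x-square)} by $\lceil n/2\rceil$, and verifying in each parity class that the numerator on the Corollary~\ref{cor-2.2-arcsin} side is indeed $a_{n+1,2k-n}x^{2k-n}$.
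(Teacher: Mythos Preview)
Your approach is essentially the paper's: apply Fa\`a di Bruno to $(1-x^2)^{-1/2}$ and compare with Corollary~\ref{cor-2.2-arcsin}. The only cosmetic difference is that you compose $f(u)=(1-u)^{-1/2}$ with $h(x)=x^2$, whereas the paper composes $f(v)=v^{-1/2}$ with $v(x)=1-x^2$; both routes produce the identical key identity
\[
\biggl(\frac{1}{\sqrt{1-x^2}\,}\biggr)^{(n)}=\sum_{k=1}^{n}\frac{(2k-1)!!}{(1-x^2)^{k+1/2}}\,\bell_{n,k}(x,1,0,\dots,0).
\]

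There is, however, a real gap in your matching step. You assert that because both sides are ``finite sums of monomials of the form $x^p/(1-x^2)^{q+1/2}$ with pairwise distinct $(p,q)$'', the identification of coefficients is forced. That is false in general: such functions are not linearly independent, since for instance
\[
\frac{x^2}{(1-x^2)^{q+1/2}}=\frac{1}{(1-x^2)^{q+1/2}}-\frac{1}{(1-x^2)^{q-1/2}}.
\]
On the Fa\`a di Bruno side the numerator $\bell_{n,k}(x,1,0,\dots,0)$ is a priori only a polynomial in $x$, so matching by denominator power alone is not justified. The paper handles this by first multiplying through by $(1-x^2)^{2n+1/2}$ to obtain genuine polynomial identities (equations~\eqref{bell-vanish-one} and~\eqref{bell-vanish-two}) before reading off the coefficients. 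An even cleaner fix is already implicit in your own derivation of~\eqref{bell-no-0-1-0-one}: the same constraint analysis ($\ell_1+2\ell_2=n$, $\ell_1+\ell_2=k$, $\ell_i=0$ for $i\ge3$) shows at once that $\bell_{n,k}(x,1,0,\dots,0)$ is either zero (when $2k<n$) or a constant multiple of the single monomial $x^{2k-n}$. Once that monomial structure is established, the functions $x^{2k-n}/(1-x^2)^{k+1/2}$ for distinct $k$ \emph{are} linearly independent (they have pairwise distinct pole orders at $x=1$), and your term-by-term matching becomes rigorous.
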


\begin{proof}
Let $v=v(x)=1-x^2$. Then, by Fa\'a di Bruno formula~\eqref{Bruno-Bell-Polynomial}, we have
\begin{align*}
\frac{\td^n}{\td x^n}\biggl(\frac1{\sqrt{1-x^2}\,}\biggr)
&=\sum_{k=1}^n\biggl(\frac1{\sqrt{v}\,}\biggr)^{(k)} \bell_{n,k}\bigl(v'(x),v''(x),\dotsc,v^{(n-k+1)}(x)\bigr)\\
&=\sum_{k=1}^n(-1)^k \prod_{\ell=0}^{k-1}\biggl(\frac12+\ell\biggr) \frac1{v^{k+1/2}} \bell_{n,k}(-2x,-2,\overbrace{0,\dotsc,0}^{n-k-1}).
\end{align*}
By the formula
\begin{equation}\label{Bell(n-n)}
\bell_{n,k}\bigl(abx_1,ab^2x_2,\dotsc,ab^{n-k+1}x_{n-k+1}\bigr)
=a^kb^n\bell_{n,k}(x_1,x_2,\dotsc,x_{n-k+1})
\end{equation}
in~\cite[p.~135]{Comtet-Combinatorics-74}, we have
\begin{equation*}
\bell_{n,k}(-2x,-2,\overbrace{0,\dotsc,0}^{n-k-1})
=(-2)^k\bell_{n,k}(x,1,\overbrace{0,\dotsc,0}^{n-k-1}).
\end{equation*}
Therefore, we have
\begin{align*}
\frac{\td^n}{\td x^n}\biggl(\frac1{\sqrt{1-x^2}\,}\biggr)
&=\sum_{k=1}^n \prod_{\ell=0}^{k-1}(2\ell+1) \frac1{v^{k+1/2}} \bell_{n,k}(x,1,\overbrace{0,\dotsc,0}^{n-k-1})\\
&=\sum_{k=1}^n\frac{(2k-1)!!}{(1-x^2)^{k+1/2}} \bell_{n,k}(x,1,\overbrace{0,\dotsc,0}^{n-k-1}).
\end{align*}
Comparing this with the formula~\eqref{sqrt-(1-x-square)} reveals that
\begin{equation}\label{bell-polyn-one}
\sum_{k=1}^{2n}\frac{(2k-1)!!}{(1-x^2)^{k+1/2}} \bell_{2n,k}(x,1,\overbrace{0,\dotsc,0}^{2n-k-1})
=\frac1{(1-x^2)^n}\sum_{k=0}^{n}a_{2n+1,2k} \frac{x^{2k}}{(1-x^2)^{k+1/2}}
\end{equation}
and
\begin{equation}\label{bell-polyn-two}
\sum_{k=1}^{2n-1}\frac{(2k-1)!!}{(1-x^2)^{k+1/2}} \bell_{2n-1,k}(x,1,\overbrace{0,\dotsc,0}^{2n-k-2})
=\frac1{(1-x^2)^n}\sum_{k=0}^{n-1}a_{2n,2k+1} \frac{x^{2k+1}}{(1-x^2)^{k+1/2}}
\end{equation}
for $n\in\mathbb{N}$.
Multiplying on both sides of~\eqref{bell-polyn-one} and~\eqref{bell-polyn-two} by $(1-x^2)^{2n+1/2}$ gives
\begin{equation}\label{bell-vanish-one}
\sum_{k=1}^{2n}{(2k-1)!!}(1-x^2)^{2n-k} \bell_{2n,k}(x,1,\overbrace{0,\dotsc,0}^{2n-k-1})
=\sum_{k=0}^{n}a_{2n+1,2k}{x^{2k}}{(1-x^2)^{n-k}}
\end{equation}
and
\begin{equation}\label{bell-vanish-two}
\sum_{k=1}^{2n-1}{(2k-1)!!}(1-x^2)^{2n-k} \bell_{2n-1,k}(x,1,\overbrace{0,\dotsc,0}^{2n-k-2})
=\sum_{k=0}^{n-1}a_{2n,2k+1}{x^{2k+1}}{(1-x^2)^{n-k}}
\end{equation}
for $n\in\mathbb{N}$. Equating these two equations finds that
\begin{enumerate}
\item
when $n>k$, Bell polynomials $\bell_{2n,k}(x,1,\overbrace{0,\dotsc,0}^{2n-k-1})=0$;
\item
when $n>k+1$, Bell polynomials $\bell_{2n-1,k}(x,1,\overbrace{0,\dotsc,0}^{2n-k-2})=0$.
\end{enumerate}
These two results may be unified as the formula~\eqref{Bell-eq-1}.
\par
Making use of the formula~\eqref{Bell-eq-1}, the formulas~\eqref{bell-vanish-one} and~\eqref{bell-vanish-two} are reduced to
\begin{align*}
&\quad\sum_{k=n}^{2n}{(2k-1)!!}(1-x^2)^{2n-k} \bell_{2n,k}(x,1,\overbrace{0,\dotsc,0}^{2n-k-1})\\
&=\sum_{\ell=0}^{n}{(2n+2\ell-1)!!}(1-x^2)^{n-\ell} \bell_{2n,n+\ell}(x,1,\overbrace{0,\dotsc,0}^{n-\ell-1})\\
&=\sum_{k=0}^{n}a_{2n+1,2k}{x^{2k}}{(1-x^2)^{n-k}}
\end{align*}
and
\begin{align*}
&\quad\sum_{k=n-1}^{2n-1}{(2k-1)!!}(1-x^2)^{2n-k} \bell_{2n-1,k}(x,1,\overbrace{0,\dotsc,0}^{2n-k-2})\\
&={(2n-3)!!}(1-x^2)^{n+1} \bell_{2n-1,n-1}(x,1,\overbrace{0,\dotsc,0}^{n-1})\\\
&\quad +\sum_{\ell=0}^{n-1}{(2n+2\ell-1)!!}(1-x^2)^{n-\ell} \bell_{2n-1,n+\ell}(x,1,\overbrace{0,\dotsc,0}^{n-\ell-2})\\
&=\sum_{k=0}^{n-1}a_{2n,2k+1}{x^{2k+1}}{(1-x^2)^{n-k}}
\end{align*}
for $n\in\mathbb{N}$. Equating the above equations figures out the formula~\eqref{k=frac-n2},
\begin{equation}
\bell_{2n,n+k}(x,1,\overbrace{0,\dotsc,0}^{n-k-1})
=\frac{a_{2n+1,2k}}{(2n+2k-1)!!}{x^{2k}},\quad 0\le k\le n, \label{Bell-eq-2}
\end{equation}
and
\begin{equation}
\bell_{2n-1,n+k}(x,1,\overbrace{0,\dotsc,0}^{n-k-2})
=\frac{a_{2n,2k+1}}{(2n+2k-1)!!}{x^{2k+1}}, \quad 0\le k\le n-1.\label{Bell-eq-3}
\end{equation}
The formulas~\eqref{Bell-eq-2} and~\eqref{Bell-eq-3} may be reformulated as~\eqref{bell-no-0-1-0-one} and~\eqref{Bell-eq-unified}.
The proof of Theorem~\ref{Bell-exp-thm} is complete.
\end{proof}

\section{Explicit formulas for the $n$-th derivatives of some functions}

In this section, with the help of Theorem~\ref{Bell-exp-thm}, we will discover explicit formulas for the $n$-th derivatives of some elementary functions.

\begin{thm}\label{arctan-deriv-thm}
For $\ell\in\mathbb{N}$, we have
\begin{equation}
(\arctan x)^{(2\ell)}
=\frac{(-1)^\ell x}{(1+x^2)^{\ell+1}}\sum_{k=0}^{\ell-1}(-1)^{k} \frac{[2(k+\ell)]!!}{[2(k+\ell)-1]!!} a_{2\ell,2k+1}\biggl(\frac{x^{2}}{1+x^2}\biggr)^k
\end{equation}
and
\begin{equation}
(\arctan x)^{(2\ell-1)}
=\frac{(-1)^{\ell-1}}{(1+x^2)^\ell}\sum_{k=0}^{\ell-1}(-1)^k \frac{[2(k+\ell-1)]!!} {[2(k+\ell-1)-1]!!} a_{2\ell-1,2k}\biggl(\frac{x^{2}}{1+x^2}\biggr)^k,
\end{equation}
where $a_{n,k}$ are defined by~\eqref{recur-1}, \eqref{recur-5}, \eqref{arcsin-driv-coeffi-1}, and~\eqref{arcsin-driv-coeffi-n}.
\end{thm}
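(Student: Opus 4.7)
The plan is to reduce everything to the first derivative $(\arctan x)' = \frac{1}{1+x^2}$ and then invoke Theorem~\ref{Bell-exp-thm}. Specifically, I would write $(\arctan x)^{(n)} = \bigl(\frac{1}{1+x^2}\bigr)^{(n-1)}$ and apply Fa\`a di Bruno's formula~\eqref{Bruno-Bell-Polynomial} with outer function $f(v)=1/v$ and inner function $v(x)=1+x^2$. Since $v'(x)=2x$, $v''(x)=2$, and $v^{(j)}(x)=0$ for $j\ge 3$, the resulting Bell polynomials have the form $\bell_{n-1,k}(2x,2,\overbrace{0,\dotsc,0}^{n-k-2})$, which is exactly the shape to which Theorem~\ref{Bell-exp-thm} applies after one homogeneity reduction.

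The next step is to strip off the factors of $2$ using the scaling identity~\eqref{Bell(n-n)} with $a=b=2$, giving $\bell_{n-1,k}(2x,2,\overbrace{0,\dotsc,0}^{n-k-2})=2^{k}\bell_{n-1,k}(x,1,\overbrace{0,\dotsc,0}^{n-k-2})$. Combined with $f^{(k)}(v)=(-1)^k k!/v^{k+1}$, this yields
\begin{equation*}
(\arctan x)^{(n)}=\sum_{k=1}^{n-1}\frac{(-1)^k\, 2^{k}k!}{(1+x^2)^{k+1}}\,\bell_{n-1,k}\bigl(x,1,\overbrace{0,\dotsc,0}^{n-k-2}\bigr).
\end{equation*}
Now Theorem~\ref{Bell-exp-thm} eliminates all terms below the ``vanishing threshold'' $k\approx (n-1)/2$ via~\eqref{Bell-eq-1} and~\eqref{k=frac-n2}, and replaces the surviving Bell polynomials by the explicit monomials~\eqref{Bell-eq-unified} (with~\eqref{bell-no-0-1-0-one} handling the exceptional boundary value when $n-1$ is even).

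I would then split into the two parities $n=2\ell$ and $n=2\ell-1$. For $n=2\ell$, the index $n-1=2\ell-1$ is odd, so only $k=\ell,\ell+1,\dotsc,2\ell-1$ contribute; shifting $k\mapsto k+\ell$ produces a sum over $k=0,\dotsc,\ell-1$ with $\bell_{2\ell-1,k+\ell}$ replaced by $\frac{a_{2\ell,2k+1}}{(2(k+\ell)-1)!!}x^{2k+1}$. The identity $2^{m}m!=(2m)!!$ converts the factorial factor $2^{k+\ell}(k+\ell)!$ into $[2(k+\ell)]!!$, which together with the denominator produces the ratio $\frac{[2(k+\ell)]!!}{[2(k+\ell)-1]!!}$ in the theorem. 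For $n=2\ell-1$, the inner order $n-1=2\ell-2$ is even, and the boundary index $k=\ell-1$ is covered by~\eqref{bell-no-0-1-0-one}; conveniently~\eqref{Bell-eq-unified} evaluated at $k=\ell-1$ gives $a_{2\ell-1,0}/(2\ell-3)!!=[(2\ell-3)!!]^2/(2\ell-3)!!=(2\ell-3)!!$ thanks to~\eqref{recur-1}, so the same explicit expression can be used uniformly across $k=\ell-1,\ell,\dotsc,2\ell-2$; shifting $k\mapsto k+\ell-1$ yields the second displayed formula.

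The only real obstacle is bookkeeping: keeping track of the signs coming from $(-1)^k$, the telescoping of double factorials in $2^{k+\ell}(k+\ell)!/(2(k+\ell)-1)!!$, and verifying that the boundary case for the odd-order derivative merges cleanly with the generic formula~\eqref{Bell-eq-unified}. Once that boundary compatibility (via $a_{2\ell-1,0}=[(2\ell-3)!!]^2$) is noted, the rest is routine algebraic manipulation and a single re-indexing in each parity.
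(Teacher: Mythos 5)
Your proposal is correct and follows essentially the same route as the paper: reduce to $\bigl(\frac{1}{1+x^2}\bigr)^{(n-1)}$, apply Fa\`a di Bruno with $v=1+x^2$, use the scaling identity~\eqref{Bell(n-n)} to reach $\bell_{n-1,k}(x,1,0,\dotsc,0)$, and then invoke Theorem~\ref{Bell-exp-thm} to kill the low-index terms and evaluate the rest, finishing with the re-indexing $k\mapsto k+\ell$ (resp.\ $k\mapsto k+\ell-1$). Your explicit check that the boundary term $a_{2\ell-1,0}/(2\ell-3)!!=(2\ell-3)!!$ merges with the generic formula is a detail the paper leaves implicit, but it changes nothing in substance.
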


\begin{proof}
Let $v=v(x)=1+x^2$. Then, by Fa\'a di Bruno formula~\eqref{Bruno-Bell-Polynomial} and the formula~\eqref{Bell(n-n)}, we obtain
\begin{align*}
(\arctan x)^{(n)}&=\biggl(\frac1{1+x^2}\biggr)^{(n-1)}\\
&=\sum_{k=1}^{n-1}\biggl(\frac1{v}\biggr)^{(k)}\bell_{n-1,k}(2x,2,\overbrace{0,\dotsc,0}^{n-k-2})\\
&=\sum_{k=1}^{n-1}(-1)^k\frac{k!}{v^{k+1}}2^k\bell_{n-1,k}(x,1,\overbrace{0,\dotsc,0}^{n-k-2})\\
&=\sum_{k=1}^{n-1}(-1)^k\frac{(2k)!!}{(1+x^2)^{k+1}}\bell_{n-1,k}(x,1,\overbrace{0,\dotsc,0}^{n-k-2}).
\end{align*}
Hence, by Theorem~\ref{Bell-exp-thm}, it follows that
\begin{enumerate}
\item
when $n=2\ell$, we have
\begin{align*}
(\arctan x)^{(2\ell)}&=\biggl(\frac1{1+x^2}\biggr)^{(2\ell-1)}\\
&=\sum_{k=1}^{2\ell-1}(-1)^k\frac{(2k)!!}{(1+x^2)^{k+1}} \bell_{2\ell-1,k}(x,1,\overbrace{0,\dotsc,0}^{2\ell-k-2})\\
&=\sum_{k=\ell}^{2\ell-1}(-1)^k \frac{(2k)!!a_{2\ell,2(k-\ell)+1}}{(2k-1)!!} \frac{x^{2k-2\ell+1}}{(1+x^2)^{k+1}};
\end{align*}
\item
when $n=2\ell-1$, we have
\begin{align*}
(\arctan x)^{(2\ell-1)}&=\biggl(\frac1{1+x^2}\biggr)^{(2\ell-2)}\\
&=\sum_{k=1}^{2\ell-2}(-1)^k\frac{(2k)!!}{(1+x^2)^{k+1}} \bell_{2\ell-2,k}(2x,2,\overbrace{0,\dotsc,0}^{2\ell-k-3})\\
&=\sum_{k=\ell-1}^{2\ell-2}(-1)^k \frac{(2k)!!a_{2\ell-1,2(k-\ell+1)}}{(2k-1)!!} \frac{x^{2(k-\ell+1)}}{(1+x^2)^{k+1}}.
\end{align*}
\end{enumerate}
The proof of Theorem~\ref{arctan-deriv-thm} is complete.
\end{proof}

\begin{rem}
After this paper was completed on 20 March 2014, the authors searched out on 27 March 2014 the papers~\cite{Adegoke-Layeni-AMENotes-2010, Lampret-AMEnotes-2011} in which several formulas for the $n$-th derivatives of the inverse tangent function were established and discussed.
\end{rem}

\begin{thm}\label{exp=x-square-thm}
For $\ell\in\mathbb{N}$, we have
\begin{equation}
\frac{\td^{2\ell}e^{\pm x^2}}{\td x^{2\ell}}
=(\pm2)^{\ell}e^{\pm x^2}\sum_{k=0}^{\ell}\frac{(\pm2)^k}{[2(k+\ell)-1]!!}a_{2\ell+1,2k} {x^{2k}}
\end{equation}
and
\begin{equation}
\frac{\td^{2\ell-1}e^{\pm x^2}}{\td x^{2\ell-1}}
=(\pm2)^{\ell}xe^{\pm x^2}\sum_{k=0}^{\ell-1} \frac{(\pm2)^k}{[2(k+\ell)-1]!!} a_{2\ell,2k+1}{x^{2k}},
\end{equation}
where $a_{n,k}$ are defined by~\eqref{recur-1}, \eqref{recur-5}, \eqref{arcsin-driv-coeffi-1}, and~\eqref{arcsin-driv-coeffi-n}.
\end{thm}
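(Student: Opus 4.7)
The plan is to derive both formulas as a direct consequence of Theorem~\ref{Bell-exp-thm} via the Fa\`a di Bruno formula~\eqref{Bruno-Bell-Polynomial} applied to the composition $e^{\pm u}\circ(x^2)$. Writing $f(u)=e^{\pm u}$ and $h(x)=x^2$, we have $h'(x)=2x$, $h''(x)=2$, and $h^{(j)}(x)=0$ for $j\ge 3$, so only the arguments $(2x,2,0,\dots,0)$ appear in the partial Bell polynomials. Since $f^{(k)}(u)=(\pm1)^k e^{\pm u}$, Fa\`a di Bruno gives
\[
\frac{\td^n}{\td x^n}e^{\pm x^2}=e^{\pm x^2}\sum_{k=1}^n(\pm1)^k\,\bell_{n,k}\bigl(2x,2,\overbrace{0,\dotsc,0}^{n-k-1}\bigr).
\]

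Next, I would apply the homogeneity identity~\eqref{Bell(n-n)} with $a=2$ and $b=1$, exactly as was already done in the proof of Theorem~\ref{Bell-exp-thm}, to obtain $\bell_{n,k}(2x,2,0,\dots,0)=2^k\bell_{n,k}(x,1,0,\dots,0)$. This collapses the sum to
\[
\frac{\td^n}{\td x^n}e^{\pm x^2}=e^{\pm x^2}\sum_{k=1}^n(\pm2)^k\bell_{n,k}(x,1,\overbrace{0,\dotsc,0}^{n-k-1}),
\]
at which point Theorem~\ref{Bell-exp-thm} supplies both the vanishing of the low-index terms (via~\eqref{Bell-eq-1} and~\eqref{k=frac-n2}) and the explicit values of the surviving ones (via~\eqref{bell-no-0-1-0-one} and~\eqref{Bell-eq-unified}).

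Finally I would split into the two parities. For $n=2\ell$, the vanishing threshold $\tfrac12[n-\tfrac{1-(-1)^n}{2}]=\ell$ kills every term with $k<\ell$, and the remaining terms $k=\ell,\ell+1,\dotsc,2\ell$ are replaced by $\frac{a_{2\ell+1,2k-2\ell}}{(2k-1)!!}x^{2k-2\ell}$; re-indexing by $j=k-\ell$ and pulling the common factor $(\pm 2)^\ell$ outside yields exactly the first claimed identity. For $n=2\ell-1$, the threshold is $\ell-1$ and the equality case~\eqref{k=frac-n2} forces $\bell_{2\ell-1,\ell-1}=0$ as well, so the sum starts at $k=\ell$; substituting $\bell_{2\ell-1,k}(x,1,0,\dots)=\frac{a_{2\ell,2k-2\ell+1}}{(2k-1)!!}x^{2k-2\ell+1}$, re-indexing by $j=k-\ell$, and extracting $(\pm2)^\ell x$ produces the second formula.

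No substantive obstacle is expected: the only real work is the index bookkeeping in the two parity cases, namely checking that the lower summation limit after Theorem~\ref{Bell-exp-thm} is applied matches the claimed range $0\le j\le\ell$ or $0\le j\le\ell-1$, and that the powers of $(\pm2)$, of $x$, and the double factorials line up with the stated formulas after the shift $k=\ell+j$.
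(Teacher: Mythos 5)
Your proposal is correct and follows essentially the same route as the paper: Fa\`a di Bruno applied to $e^{\pm u}\circ x^2$, the homogeneity identity~\eqref{Bell(n-n)} to reduce to $\bell_{n,k}(x,1,0,\dotsc,0)$, and then Theorem~\ref{Bell-exp-thm} to kill the low-index terms and substitute the explicit values, followed by the shift $k\mapsto k-\ell$. The index bookkeeping you describe (including the extra vanishing of $\bell_{2\ell-1,\ell-1}$ via~\eqref{k=frac-n2}) matches the paper's computation exactly.
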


\begin{proof}
Let $u=u(x)=x^2$. Then, by Fa\'a di Bruno formula~\eqref{Bruno-Bell-Polynomial} and the formula~\eqref{Bell(n-n)}, we acquire
\begin{equation*}
\frac{\td^ne^{\pm x^2}}{\td x^n}
=\sum_{k=1}^n\frac{\td^ke^{\pm u}}{\td u^k} \bell_{n,k}(2x,2,\overbrace{0,\dotsc,0}^{n-k-1})
=e^{\pm x^2}\sum_{k=1}^n(\pm2)^k\bell_{n,k}(x,1,\overbrace{0,\dotsc,0}^{n-k-1}).
\end{equation*}
Hence, by Theorem~\ref{Bell-exp-thm}, it follows that
\begin{enumerate}
\item
when $n=2\ell$, we have
\begin{align*}
\frac{\td^{2\ell}e^{\pm x^2}}{\td x^{2\ell}}
&=e^{\pm x^2}\sum_{k=1}^{2\ell}(\pm2)^k \bell_{2\ell,k}(x,1,\overbrace{0,\dotsc,0}^{2\ell-k-1})\\
&=e^{\pm x^2}\sum_{k=\ell}^{2\ell}(\pm2)^k \frac{a_{2\ell+1,2(k-\ell)}}{(2k-1)!!}{x^{2(k-\ell)}};
\end{align*}
\item
when $n=2\ell-1$, we have
\begin{align*}
\frac{\td^{2\ell-1}e^{\pm x^2}}{\td x^{2\ell-1}}
&=e^{\pm x^2}\sum_{k=1}^{2\ell-1}(\pm2)^k \bell_{{2\ell-1},k}(x,1,\overbrace{0,\dotsc,0}^{2\ell-k-2})\\
&=e^{\pm x^2}\sum_{k=\ell}^{2\ell-1}(\pm2)^k\frac{a_{2\ell,2(k-\ell)+1}}{(2k-1)!!}{x^{2(k-\ell)+1}}.
\end{align*}
\end{enumerate}
The proof of Theorem~\ref{exp=x-square-thm} is complete.
\end{proof}

\begin{thm}\label{sin-cos-deriv-thm}
For $\ell\in\mathbb{N}$, we have
\begin{align}
\frac{\td^{2\ell}\sin(x^2)}{\td x^{2\ell}}
&=2^\ell\sum_{k=0}^{\ell} \frac{2^ka_{2\ell+1,2k}}{[2(k+\ell)-1]!!} {x^{2k}} \sin\Bigl(x^2+\frac{\pi}{2}(k+\ell)\Bigr),\\
\frac{\td^{2\ell-1}\sin(x^2)}{\td x^{2\ell-1}}
&=2^\ell\sum_{k=0}^{\ell-1} \frac{2^ka_{2\ell,2k+1}}{[2(k+\ell)-1]!!} {x^{2k+1}} \sin\Bigl(x^2+\frac{\pi}{2}(k+\ell)\Bigr),\\
\frac{\td^{2\ell}\cos(x^2)}{\td x^{2\ell}}
&=2^\ell\sum_{k=0}^{\ell} \frac{2^ka_{2\ell+1,2k}}{[2(k+\ell)-1]!!} {x^{2k}} \cos\Bigl(x^2+\frac{\pi}{2}(k+\ell)\Bigr),\\
\frac{\td^{2\ell-1}\cos(x^2)}{\td x^{2\ell-1}}
&=2^\ell\sum_{k=0}^{\ell-1} \frac{2^ka_{2\ell,2k+1}}{[2(k+\ell)-1]!!} {x^{2k+1}} \cos\Bigl(x^2+\frac{\pi}{2}(k+\ell)\Bigr),
\end{align}
where $a_{n,k}$ are defined by~\eqref{recur-1}, \eqref{recur-5}, \eqref{arcsin-driv-coeffi-1}, and~\eqref{arcsin-driv-coeffi-n}.
\end{thm}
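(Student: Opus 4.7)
The plan is to mimic the approach used for Theorems~\ref{arctan-deriv-thm} and~\ref{exp=x-square-thm}. Taking $u=u(x)=x^{2}$ and applying Fa\`a di Bruno's formula~\eqref{Bruno-Bell-Polynomial} to the composites $\sin(x^{2})$ and $\cos(x^{2})$, then using~\eqref{sin-cos-n-deriv} to evaluate the outer derivatives, one obtains
\[
\frac{\td^{n}}{\td x^{n}}\sin(x^{2})
=\sum_{k=1}^{n}\sin\!\Bigl(x^{2}+\frac{k\pi}{2}\Bigr)\, \bell_{n,k}\bigl(2x,2,\overbrace{0,\dotsc,0}^{n-k-1}\bigr)
\]
together with the analogous identity for $\cos(x^{2})$ (replace $\sin$ by $\cos$). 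The homogeneity relation~\eqref{Bell(n-n)} then collapses the partial Bell polynomials to $\bell_{n,k}(2x,2,0,\dots,0)=2^{k}\bell_{n,k}(x,1,0,\dots,0)$, which brings the right-hand sides into exactly the form to which Theorem~\ref{Bell-exp-thm} applies.

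Invoking Theorem~\ref{Bell-exp-thm} next, the vanishing clauses~\eqref{k=frac-n2} and~\eqref{Bell-eq-1} truncate each sum to the upper half of the range, and the surviving polynomials are given by~\eqref{bell-no-0-1-0-one} and~\eqref{Bell-eq-unified} as single monomials in $x$. Splitting according to the parity of $n$ is then forced: for $n=2\ell$ the surviving values are $k=\ell,\ell+1,\dotsc,2\ell$, while for $n=2\ell-1$ they are $k=\ell,\ell+1,\dotsc,2\ell-1$.

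The remaining work is purely bookkeeping: reindex the surviving range by the shift $k\mapsto k+\ell$. This produces the overall $2^{\ell}$ outside the sum and the factor $2^{k}$ inside, the denominator $[2(k+\ell)-1]!!$, the monomial $x^{2k}$ or $x^{2k+1}$ according to parity with numerator $a_{2\ell+1,2k}$ or $a_{2\ell,2k+1}$, and the phase $\sin\!\bigl(x^{2}+\tfrac{(k+\ell)\pi}{2}\bigr)$ or $\cos\!\bigl(x^{2}+\tfrac{(k+\ell)\pi}{2}\bigr)$. Matching these against the four displayed identities verifies each in turn. The main obstacle---essentially the only substantive point---is to align the parity-dependent cutoffs among~\eqref{k=frac-n2}, \eqref{bell-no-0-1-0-one}, \eqref{Bell-eq-1}, and~\eqref{Bell-eq-unified} at the boundary indices $k=\ell-1$ (vanishing) and $k=\ell$ (leading term), so that the reindexed sum has the correct lower limit and the phase $\tfrac{(k+\ell)\pi}{2}$ comes out with the right offset. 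No new analytic idea beyond those already developed for Theorems~\ref{arctan-deriv-thm} and~\ref{exp=x-square-thm} is needed.
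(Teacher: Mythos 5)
Your proposal is correct and follows essentially the same route as the paper: apply Fa\`a di Bruno's formula~\eqref{Bruno-Bell-Polynomial} with $u=x^2$, use~\eqref{sin-cos-n-deriv} and the homogeneity relation~\eqref{Bell(n-n)} to reduce to $\bell_{n,k}(x,1,0,\dotsc,0)$, truncate and evaluate via Theorem~\ref{Bell-exp-thm} split by the parity of $n$, and reindex $k\mapsto k+\ell$. The boundary bookkeeping you flag (vanishing at $k=\ell-1$ in the odd case via~\eqref{k=frac-n2}, and the leading term at $k=\ell$) is exactly what the paper's proof handles, so nothing is missing.
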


\begin{proof}
Let $u=u(x)=x^2$. Then, by Fa\'a di Bruno formula~\eqref{Bruno-Bell-Polynomial} and the formulas~\eqref{Bell(n-n)} and~\eqref{sin-cos-n-deriv}, we gain
\begin{align*}
\frac{\td^n\sin(x^2)}{\td x^n}&=\sum_{k=1}^n\frac{\td^k\sin u}{\td u^k} \bell_{n,k}(2x,2,\overbrace{0,\dotsc,0}^{n-k-1})\\
&=\sum_{k=1}^n\sin\Bigl(x^2+\frac{\pi}{2}k\Bigr) 2^k\bell_{n,k}(x,1,\overbrace{0,\dotsc,0}^{n-k-1}).
\end{align*}
Accordingly, by Theorem~\ref{Bell-exp-thm}, it follows that
\begin{enumerate}
\item
when $n=2\ell$, we have
\begin{align*}
\frac{\td^{2\ell}\sin(x^2)}{\td x^{2\ell}}
&=\sum_{k=1}^{2\ell}\sin\Bigl(x^2+\frac{\pi}{2}k\Bigr) 2^k\bell_{2\ell,k}(x,1,\overbrace{0,\dotsc,0}^{2\ell-k-1})\\
&=\sum_{k=\ell}^{2\ell}2^k\sin\Bigl(x^2+\frac{\pi}{2}k\Bigr) \frac{a_{2\ell+1,2(k-\ell)}}{(2k-1)!!}{x^{2(k-\ell)}};
\end{align*}
\item
when $n=2\ell-1$, we have
\begin{align*}
\frac{\td^{2\ell-1}\sin(x^2)}{\td x^{2\ell-1}}
&=\sum_{k=1}^{2\ell-1}\sin\Bigl(x^2+\frac{\pi}{2}k\Bigr) 2^k\bell_{2\ell-1,k}(x,1,\overbrace{0,\dotsc,0}^{2\ell-k-2})\\
&=\sum_{k=\ell}^{2\ell-1}2^k\sin\Bigl(x^2+\frac{\pi}{2}k\Bigr) \frac{a_{2\ell,2(k-\ell)+1}}{(2k-1)!!}{x^{2(k-\ell)+1}}.
\end{align*}
\end{enumerate}
\par
By the formulas in~\eqref{sin-cos-n-deriv}, if replacing the sine by the cosine in the above arguments, all results are also valid.
The proof of Theorem~\ref{sin-cos-deriv-thm} is complete.
\end{proof}

\begin{thm}\label{ln-deriv-thm}
For $\ell\in\mathbb{N}$, we have
\begin{equation}
\biggl(\ln\frac{1+x}{1-x}\biggr)^{(2\ell)}
=\frac{2x}{(1+x^2)^{\ell+1}}\sum_{k=0}^{\ell-1} \frac{[2(k+\ell)]!!}{[2(k+\ell)-1]!!} a_{2\ell,2k+1} \biggl(\frac{x^2}{1-x^2}\biggr)^k
\end{equation}
and
\begin{equation}
\biggl(\ln\frac{1+x}{1-x}\biggr)^{(2\ell-1)}
=\frac{2}{(1+x^2)^{\ell}}\sum_{k=0}^{2\ell-2} \frac{[2(k+\ell-1)]!!}{[2(k+\ell-1)-1]!!} a_{2\ell-1,2k} \biggl(\frac{x^2}{1-x^2}\biggr)^k,
\end{equation}
where $a_{n,k}$ are defined by~\eqref{recur-1}, \eqref{recur-5}, \eqref{arcsin-driv-coeffi-1}, and~\eqref{arcsin-driv-coeffi-n}.
\end{thm}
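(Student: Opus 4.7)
The plan is to follow the same template as the proofs of Theorems~\ref{arctan-deriv-thm}, \ref{exp=x-square-thm}, and~\ref{sin-cos-deriv-thm}, reducing the problem to the $(n-1)$-st derivative of a rational function and then invoking Theorem~\ref{Bell-exp-thm}. First I observe that
\begin{equation*}
\biggl(\ln\frac{1+x}{1-x}\biggr)'=\frac{1}{1+x}+\frac{1}{1-x}=\frac{2}{1-x^{2}},
\end{equation*}
so the $n$-th derivative of $\ln\frac{1+x}{1-x}$ equals the $(n-1)$-st derivative of $\frac{2}{1-x^{2}}$. Setting $v=v(x)=1-x^{2}$, applying the Fa\`a di Bruno formula~\eqref{Bruno-Bell-Polynomial} with outer function $f(v)=2/v$ (whose $k$-th derivative is $2(-1)^{k}k!/v^{k+1}$), and then using the scaling property~\eqref{Bell(n-n)} with $a=-2$, $b=1$ to rewrite $\bell_{n-1,k}(-2x,-2,0,\dotsc,0)=(-2)^{k}\bell_{n-1,k}(x,1,0,\dotsc,0)$, the two sign contributions cancel and yield
\begin{equation*}
\biggl(\ln\frac{1+x}{1-x}\biggr)^{(n)}
=\sum_{k=1}^{n-1}\frac{2\,(2k)!!}{(1-x^{2})^{k+1}}\,\bell_{n-1,k}(x,1,\overbrace{0,\dotsc,0}^{n-k-2}).
\end{equation*}

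Next I invoke Theorem~\ref{Bell-exp-thm} to evaluate each Bell polynomial in closed form. When $n=2\ell$, formula~\eqref{Bell-eq-1} kills every term with $k<\ell-1$ and~\eqref{k=frac-n2} kills the boundary term at $k=\ell-1$, while~\eqref{Bell-eq-unified} gives $\bell_{2\ell-1,k}(x,1,\overbrace{0,\dotsc,0}^{2\ell-k-2})=a_{2\ell,2k-2\ell+1}\,x^{2k-2\ell+1}/(2k-1)!!$ for $\ell\le k\le 2\ell-1$. Reindexing by $k=\ell+j$, $j=0,\dotsc,\ell-1$, and factoring out $2x/(1-x^{2})^{\ell+1}$ collects the surviving terms into $\bigl(x^{2}/(1-x^{2})\bigr)^{j}$ multiplied by the double-factorial ratio $[2(\ell+j)]!!/[2(\ell+j)-1]!!$ times $a_{2\ell,2j+1}$, producing the claimed even-order expression. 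When $n=2\ell-1$ the argument is parallel but the sum begins at $k=\ell-1$: the boundary value $\bell_{2\ell-2,\ell-1}(x,1,\overbrace{0,\dotsc,0}^{\ell-2})=(2\ell-3)!!$ supplied by~\eqref{bell-no-0-1-0-one} (consistent with~\eqref{Bell-eq-unified} via $a_{2\ell-1,0}=[(2\ell-3)!!]^{2}$) contributes a constant term, and the reindexing $k=\ell-1+j$ together with extracting $2/(1-x^{2})^{\ell}$ assembles the odd-order expression.

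The Fa\`a di Bruno setup and sign cancellation are routine; the only real bookkeeping hurdle will be matching the starting indices and the double-factorial coefficients exactly between the even and odd cases, because the odd-derivative sum picks up the nonzero boundary Bell polynomial $\bell_{2\ell-2,\ell-1}$ that is absent from the even-derivative sum. No new analytic ingredient is required beyond what was already used in Theorems~\ref{arctan-deriv-thm}--\ref{sin-cos-deriv-thm}.
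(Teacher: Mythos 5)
Your proof is correct and follows essentially the same route as the paper's: Fa\`a di Bruno with the square as inner function, the scaling identity~\eqref{Bell(n-n)}, and Theorem~\ref{Bell-exp-thm}, with the only cosmetic difference being that you take $v=1-x^2$ and outer function $2/v$ where the paper takes $u=x^2$ and outer function $2/(1-u)$, yielding the identical intermediate sum $2\sum_{k=1}^{n-1}(2k)!!\,(1-x^2)^{-(k+1)}\bell_{n-1,k}(x,1,0,\dotsc,0)$. One caveat: what both your derivation and the paper's actually produce has $(1-x^2)$ in the denominators and upper summation limit $\ell-1$ in the odd-order case, so the printed statement's $(1+x^2)^{\ell+1}$, $(1+x^2)^{\ell}$, and $2\ell-2$ appear to be typographical errors rather than defects in your argument.
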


\begin{proof}
Let $u=u(x)=x^2$. Then, by Fa\'a di Bruno formula~\eqref{Bruno-Bell-Polynomial} and the formula~\eqref{Bell(n-n)}, we obtain
\begin{align*}
\biggl(\ln\frac{1+x}{1-x}\biggr)^{(n)}&=2\biggl(\frac1{1-x^2}\biggr)^{(n-1)}\\
&=2\sum_{k=1}^{n-1}\biggl(\frac1{1-u}\biggr)^{(k)}\bell_{n-1,k}(2x,2,\overbrace{0,\dotsc,0}^{n-k-2})\\
&=2\sum_{k=1}^{n-1}\frac{k!}{(1-u)^{k+1}}2^k\bell_{n-1,k}(x,1,\overbrace{0,\dotsc,0}^{n-k-2})\\
&=2\sum_{k=1}^{n-1}\frac{(2k)!!}{(1-x^2)^{k+1}}\bell_{n-1,k}(x,1,\overbrace{0,\dotsc,0}^{n-k-2}).
\end{align*}
Hence, by Theorem~\ref{Bell-exp-thm}, it follows that
\begin{enumerate}
\item
when $n=2\ell$, we have
\begin{align*}
\biggl(\ln\frac{1+x}{1-x}\biggr)^{(2\ell)}&=2\biggl(\frac1{1-x^2}\biggr)^{(2\ell-1)}\\
&=2\sum_{k=1}^{2\ell-1} \frac{(2k)!!}{(1-x^2)^{k+1}} \bell_{2\ell-1,k}(x,1,\overbrace{0,\dotsc,0}^{2\ell-k-2})\\
&=2\sum_{k=\ell}^{2\ell-1} \frac{(2k)!!a_{2\ell,2(k-\ell)+1}}{(2k-1)!!} \frac{x^{2k-2\ell+1}}{(1-x^2)^{k+1}};
\end{align*}
\item
when $n=2\ell-1$, we have
\begin{align*}
\biggl(\ln\frac{1+x}{1-x}\biggr)^{(2\ell-1)}&=2\biggl(\frac1{1-x^2}\biggr)^{(2\ell-2)}\\
&=2\sum_{k=1}^{2\ell-2} \frac{(2k)!!}{(1-x^2)^{k+1}} \bell_{2\ell-2,k}(2x,2,\overbrace{0,\dotsc,0}^{2\ell-k-3})\\
&=2\sum_{k=\ell-1}^{2\ell-2} \frac{(2k)!!a_{2\ell-1,2(k-\ell+1)}}{(2k-1)!!} \frac{x^{2(k-\ell+1)}}{(1-x^2)^{k+1}}.
\end{align*}
\end{enumerate}
The proof of Theorem~\ref{ln-deriv-thm} is complete.
\end{proof}

\begin{rem}
Since
\begin{equation*}
\biggl(\ln\frac{1+x}{1-x}\biggr)'=\frac2{1-x^2}=\frac{1}{x+1}-\frac{1}{x-1},
\end{equation*}
the $n$-th derivative of $\ln\frac{1+x}{1-x}$ may also be computed by
\begin{equation}
\biggl(\ln\frac{1+x}{1-x}\biggr)^{(n)}
=(-1)^{n-1}(n-1)!\biggl[\frac1{(x+1)^{n}}-\frac1{(x-1)^{n}}\biggr], \quad n\in\mathbb{N}.
\end{equation}
Similarly,
\begin{equation}
\frac{\td^n\ln(1-x^2)}{\td x^n}
=(-1)^{n-1}(n-1)!\biggl[\frac1{(x+1)^{n}}+\frac1{(x-1)^{n}}\biggr], \quad n\in\mathbb{N}.
\end{equation}
\end{rem}

\begin{thm}\label{ln(1+x-square)-thm}
For $\ell\in\mathbb{N}$, we have
\begin{equation}
\frac{\td^{2\ell}\ln(1+x^2)}{\td x^{2\ell}}
=\frac{(-1)^{\ell-1}2}{(1+x^2)^{\ell}}\sum_{k=0}^{\ell}(-1)^{k}\frac{[2(k+\ell-1)]!!}{[2(k+\ell)-1]!!} {a_{2\ell+1,2k}}\biggl(\frac{x^2}{1+x^2}\biggr)^k
\end{equation}
and
\begin{equation}
\frac{\td^{2\ell-1}\ln(1+x^2)}{\td x^{2\ell-1}}
=\frac{(-1)^{\ell-1}2x}{(1+x^2)^{\ell}}\sum_{k=0}^{\ell-1}(-1)^{k}\frac{[2(k+\ell-1)]!!}{[2(k+\ell)-1]!!} {a_{2\ell,2k+1}} \biggl(\frac{x^2}{1+x^2}\biggr)^k,
\end{equation}
where $a_{n,k}$ are defined by~\eqref{recur-1}, \eqref{recur-5}, \eqref{arcsin-driv-coeffi-1}, and~\eqref{arcsin-driv-coeffi-n}.
\end{thm}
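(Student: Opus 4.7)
The plan is to apply the Fa\`a di Bruno formula~\eqref{Bruno-Bell-Polynomial} directly to $\ln(1+x^2)$ with inner function $u=u(x)=1+x^2$, whose only non-vanishing derivatives are $u'(x)=2x$ and $u''(x)=2$; consequently the Bell polynomials appearing in the sum take the arguments $(2x,2,\overbrace{0,\dotsc,0}^{n-k-1})$. Combining this with $(\ln u)^{(k)}=(-1)^{k-1}(k-1)!/u^k$ and the scaling identity~\eqref{Bell(n-n)} yields
\begin{equation*}
\frac{\td^n\ln(1+x^2)}{\td x^n}
=2\sum_{k=1}^{n}\frac{(-1)^{k-1}[2(k-1)]!!}{(1+x^2)^k}\bell_{n,k}(x,1,\overbrace{0,\dotsc,0}^{n-k-1}),
\end{equation*}
which mirrors the opening computations in the proofs of Theorems~\ref{arctan-deriv-thm} and~\ref{ln-deriv-thm}; the only new ingredient is the alternating factor $(-1)^{k-1}$ inherited from the derivatives of $\ln$.

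Next, I invoke Theorem~\ref{Bell-exp-thm} to truncate the sum: by~\eqref{Bell-eq-1}, together with~\eqref{k=frac-n2} at the boundary of the odd case, the Bell polynomials $\bell_{n,k}(x,1,\overbrace{0,\dotsc,0}^{n-k-1})$ vanish for $k<\ell$ in both parities of $n$, so only indices $k\in\{\ell,\ell+1,\dotsc,n\}$ contribute. Substituting the explicit values from~\eqref{bell-no-0-1-0-one} and~\eqref{Bell-eq-unified} into the surviving terms and reindexing via $k=\ell+j$ reduces the right-hand side to an explicit polynomial in $x^2/(1+x^2)$ once a common factor is pulled out.

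Concretely, for $n=2\ell$ the identification $\bell_{2\ell,\ell+j}(x,1,\overbrace{0,\dotsc,0}^{\ell-j-1})=a_{2\ell+1,2j}x^{2j}/(2\ell+2j-1)!!$ with $0\le j\le\ell$, together with factoring out $(-1)^{\ell-1}2/(1+x^2)^\ell$ and rewriting $x^{2j}/(1+x^2)^j$ as $(x^2/(1+x^2))^j$, delivers the first stated identity. For $n=2\ell-1$ the analogous substitution $\bell_{2\ell-1,\ell+j}(x,1,\overbrace{0,\dotsc,0}^{\ell-j-2})=a_{2\ell,2j+1}x^{2j+1}/(2\ell+2j-1)!!$ with $0\le j\le\ell-1$ produces the second identity, the extra factor of $x$ from the Bell polynomial supplying the prefactor $2x/(1+x^2)^\ell$. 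The main obstacle is nothing more than careful bookkeeping of indices, double factorials, and the sign $(-1)^{\ell-1}$ that emerges when $(-1)^{k-1}$ is shifted to $(-1)^{\ell+j-1}$; no idea beyond those already exploited in Theorems~\ref{arctan-deriv-thm}, \ref{exp=x-square-thm}, and~\ref{ln-deriv-thm} is required.
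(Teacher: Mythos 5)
Your proposal is correct and follows essentially the same route as the paper's own proof: apply Fa\`a di Bruno with inner function the square, use $(\ln u)^{(k)}=(-1)^{k-1}(k-1)!/u^k$ and the scaling identity~\eqref{Bell(n-n)} to reach $2\sum_{k=1}^n(-1)^{k-1}(2k-2)!!\,\bell_{n,k}(x,1,0,\dotsc,0)/(1+x^2)^k$, then truncate and evaluate via Theorem~\ref{Bell-exp-thm}. The index bookkeeping (including the use of~\eqref{k=frac-n2} to kill the $k=\ell-1$ term in the odd case) checks out, so nothing further is needed.
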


\begin{proof}
Let $u=u(x)=x^2$. Using Fa\'a di Bruno formula~\eqref{Bruno-Bell-Polynomial} and the formula~\eqref{Bell(n-n)} yields
\begin{align*}
\frac{\td^n\ln(1+x^2)}{\td x^n}
&=\sum_{k=1}^n[\ln(1+u)]^{(k)}\bell_{n,k}(2x,2,\overbrace{0,\dotsc,0}^{n-k-1})\\
&=\sum_{k=1}^n(-1)^{k-1}\frac{(k-1)!}{(1+u)^k}2^k\bell_{n,k}(x,1,\overbrace{0,\dotsc,0}^{n-k-1})\\
&=2\sum_{k=1}^n(-1)^{k-1}\frac{(2k-2)!!}{(1+x^2)^k}\bell_{n,k}(x,1,\overbrace{0,\dotsc,0}^{n-k-1}).
\end{align*}
Consequently, by Theorem~\ref{Bell-exp-thm}, it follows that
\begin{enumerate}
\item
when $n=2\ell$, we have
\begin{align*}
\frac{\td^{2\ell}\ln(1+x^2)}{\td x^{2\ell}}
&=2\sum_{k=1}^{2\ell}(-1)^{k-1}\frac{(2k-2)!!}{(1+x^2)^k} \bell_{2\ell,k}(x,1,\overbrace{0,\dotsc,0}^{2\ell-k-1})\\
&=2\sum_{k=\ell}^{2\ell}(-1)^{k-1}\frac{(2k-2)!!}{(1+x^2)^k} \frac{a_{2\ell+1,2(k-\ell)}}{(2k-1)!!}{x^{2(k-\ell)}};
\end{align*}
\item
when $n=2\ell-1$, we have
\begin{align*}
\frac{\td^{2\ell-1}\ln(1+x^2)}{\td x^{2\ell-1}}
&=2\sum_{k=1}^{2\ell-1}(-1)^{k-1}\frac{(2k-2)!!}{(1+x^2)^k} \bell_{2\ell-1,k}(x,1,\overbrace{0,\dotsc,0}^{2\ell-k-2})\\
&=2\sum_{k=\ell}^{2\ell-1}(-1)^{k-1}\frac{(2k-2)!!}{(1+x^2)^k}  \frac{a_{2\ell,2(k-\ell)+1}}{(2k-1)!!}{x^{2(k-\ell)+1}}.
\end{align*}
\end{enumerate}
The proof of Theorem~\ref{ln(1+x-square)-thm} is complete.
\end{proof}

\begin{thm}\label{power-n-deriv-thm}
Let $\alpha\not\in\{0\}\cup\mathbb{N}$. For $\ell\in\mathbb{N}$, we have
\begin{equation}
\frac{\td^{2\ell}[(1\pm x^2)^\alpha]}{\td x^{2\ell}}
=\frac{(\pm2)^\ell}{(1\pm x^2)^{\ell-\alpha}}\sum_{k=0}^{\ell} \frac{(\pm2)^k\prod_{m=1}^{k+\ell}(\alpha-m+1)}{[2(k+\ell)-1]!!} a_{2\ell+1,2k} \biggl(\frac{x^2}{1\pm x^2}\biggr)^k
\end{equation}
and
\begin{equation}
\frac{\td^{2\ell-1}[(1\pm x^2)^\alpha]}{\td x^{2\ell-1}}
=\frac{(\pm2)^{\ell}x}{(1\pm x^2)^{\ell-\alpha}}\sum_{k=0}^{\ell-1}\frac{(\pm2)^{k} \prod_{m=1}^{k+\ell}(\alpha-m+1)}{[2(k+\ell)-1]!!} a_{2\ell,2k+1}\biggl(\frac{x^2}{1\pm x^2}\biggr)^k,
\end{equation}
where $a_{n,k}$ are defined by~\eqref{recur-1}, \eqref{recur-5}, \eqref{arcsin-driv-coeffi-1}, and~\eqref{arcsin-driv-coeffi-n}.
\end{thm}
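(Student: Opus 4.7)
The plan is to follow verbatim the five-step template already used in Theorems~\ref{arctan-deriv-thm}, \ref{exp=x-square-thm}, \ref{sin-cos-deriv-thm}, \ref{ln-deriv-thm}, and~\ref{ln(1+x-square)-thm}. Put $u=u(x)=x^2$, so that $(1\pm x^2)^\alpha = f(u)$ with $f(u)=(1\pm u)^\alpha$ and
\[
f^{(k)}(u) = (\pm 1)^k \prod_{m=1}^{k}(\alpha-m+1)\,(1\pm u)^{\alpha-k}.
\]
The hypothesis $\alpha\notin\{0\}\cup\mathbb{N}$ is precisely what keeps this falling-factorial product from vanishing at any $k$. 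Combining the Fa\`a di Bruno formula~\eqref{Bruno-Bell-Polynomial} with the Bell-polynomial scaling identity~\eqref{Bell(n-n)} then yields
\[
\frac{\td^n(1\pm x^2)^\alpha}{\td x^n} = \sum_{k=1}^{n} (\pm 2)^k \prod_{m=1}^{k}(\alpha-m+1)\,(1\pm x^2)^{\alpha-k}\,\bell_{n,k}\bigl(x,1,\overbrace{0,\dotsc,0}^{n-k-1}\bigr).
\]

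The second step is to feed in Theorem~\ref{Bell-exp-thm}: equations~\eqref{Bell-eq-1} and~\eqref{Bell-eq-unified} simultaneously kill every term with $k<\lceil n/2\rceil$ and replace each surviving Bell polynomial by $\frac{a_{n+1,2k-n}}{(2k-1)!!}\,x^{2k-n}$. I will then split the discussion into the even case $n=2\ell$ (so the effective sum runs over $k\in\{\ell,\dotsc,2\ell\}$) and the odd case $n=2\ell-1$ (so $k\in\{\ell,\dotsc,2\ell-1\}$), and reindex $k\mapsto k+\ell$ in each, exactly mirroring the manoeuvres in the preceding proofs.

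The remaining work is purely cosmetic: factor $(\pm 2)^\ell$, one copy of $x$ (in the odd case), and the power $(1\pm x^2)^{\alpha-\ell}$ out of the sum, and absorb $x^{2k}(1\pm x^2)^{-k}$ as $\bigl(\frac{x^2}{1\pm x^2}\bigr)^{k}$. I do not expect a genuine obstacle, since every ingredient has already been exercised in the earlier theorems of this section. The only place requiring care is bookkeeping of the falling-factorial product after the index shift: its upper limit becomes $k+\ell$ (not $k$), and the signs $(\pm 1)^{k+\ell}$ must be grouped with $2^{k+\ell}$ so that a clean $(\pm 2)^{k+\ell}$ appears and splits as the external $(\pm 2)^\ell$ times the internal $(\pm 2)^k$ advertised in the statement.
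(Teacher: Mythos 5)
Your proposal is correct and follows essentially the same route as the paper's own proof: Fa\`a di Bruno with $u=x^2$ plus the scaling identity~\eqref{Bell(n-n)} to reach $\sum_{k=1}^{n}(\pm2)^k\prod_{m=1}^{k}(\alpha-m+1)(1\pm x^2)^{\alpha-k}\bell_{n,k}(x,1,0,\dotsc,0)$, then Theorem~\ref{Bell-exp-thm} to truncate the sum and the reindexing $k\mapsto k+\ell$ in the even and odd cases. The only cosmetic difference is that the paper absorbs the sign into $u=\pm x^2$ rather than into $f$, and note that discarding the term at $k=\lceil n/2\rceil-1$ in the odd case uses~\eqref{k=frac-n2} rather than~\eqref{Bell-eq-1}, a detail your "kill every term with $k<\lceil n/2\rceil$" glosses over but which Theorem~\ref{Bell-exp-thm} covers.
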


\begin{proof}
Let $u=u(x)=\pm x^2$. Using Fa\'a di Bruno formula~\eqref{Bruno-Bell-Polynomial} and the formula~\eqref{Bell(n-n)} brings out
\begin{align*}
\frac{\td^n[(1\pm x^2)^\alpha]}{\td x^n}
&=\sum_{k=1}^n[(1+u)^\alpha]^{(k)}\bell_{n,k}(\pm2x,\pm2,\overbrace{0,\dotsc,0}^{n-k-1})\\
&=\sum_{k=1}^n \prod_{m=1}^{k}(\alpha-m+1)(1+u)^{\alpha-k} (\pm2)^k\bell_{n,k}(x,1,\overbrace{0,\dotsc,0}^{n-k-1})\\
&=\sum_{k=1}^n \Biggl[(\pm2)^k\prod_{m=1}^{k}(\alpha-m+1)\Biggr](1\pm x^2)^{\alpha-k} \bell_{n,k}(x,1,\overbrace{0,\dotsc,0}^{n-k-1}).
\end{align*}
As a result, by Theorem~\ref{Bell-exp-thm}, it follows that
\begin{enumerate}
\item
when $n=2\ell$, we have
\begin{align*}
\frac{\td^{2\ell}[(1\pm x^2)^\alpha]}{\td x^{2\ell}}
&=\sum_{k=1}^{2\ell}\Biggl[(\pm2)^k\prod_{m=1}^{k}(\alpha-m+1)\Biggr](1\pm x^2)^{\alpha-k} \bell_{2\ell,k}(x,1,\overbrace{0,\dotsc,0}^{2\ell-k-1})\\
&=\sum_{k=\ell}^{2\ell}\Biggl[(\pm2)^k\prod_{m=1}^{k}(\alpha-m+1)\Biggr](1\pm x^2)^{\alpha-k} \frac{a_{2\ell+1,2(k-\ell)}}{(2k-1)!!}{x^{2(k-\ell)}};
\end{align*}
\item
when $n=2\ell-1$, we have
\begin{align*}
\frac{\td^{2\ell-1}[(1\pm x^2)^\alpha]}{\td x^{2\ell-1}}
&=\sum_{k=1}^{2\ell-1}\Biggl[(\pm2)^k\prod_{m=1}^{k}(\alpha-m+1)\Biggr](1\pm x^2)^{\alpha-k} \bell_{2\ell-1,k}(x,1,\overbrace{0,\dotsc,0}^{2\ell-k-2})\\
&=\sum_{k=\ell}^{2\ell-1}\Biggl[(\pm2)^k\prod_{m=1}^{k}(\alpha-m+1)\Biggr](1\pm x^2)^{\alpha-k}  \frac{a_{2\ell,2(k-\ell)+1}}{(2k-1)!!}{x^{2(k-\ell)+1}}.
\end{align*}
\end{enumerate}
The proof of Theorem~\ref{power-n-deriv-thm} is complete.
\end{proof}

\begin{rem}
In general, the $n$-th derivatives of the function $h(x)=f(x^2)$ may be expressed as
\begin{equation}
h^{(2\ell)}(x)=\sum_{k=0}^{\ell} \frac1{[2(k+\ell)-1]!!}a_{2\ell+1,2k}{x^{2k}} f^{(k+\ell)}(x^2)
\end{equation}
and
\begin{equation}
h^{(2\ell-1)}(x)=\sum_{k=0}^{\ell-1} \frac1{[2(k+\ell)-1]!!} a_{2\ell,2k+1} {x^{2k+1}}f^{(k+\ell)}(x^2),
\end{equation}
where $\ell\in\mathbb{N}$ and $a_{n,k}$ are defined by~\eqref{recur-1}, \eqref{recur-5}, \eqref{arcsin-driv-coeffi-1}, and~\eqref{arcsin-driv-coeffi-n}.
\end{rem}

\section{Miscellanea}

By Fa\'a di Bruno formula~\eqref{Bruno-Bell-Polynomial}, we may establish
\begin{multline*}
-(\tan x)^{(n-1)}=(\ln\cos x)^{(n)}\\*
=\sum_{k=1}^n\frac{(-1)^{k-1}(k-1)!} {\cos^kx} \bell_{n,k}\Bigl(\cos\Bigl(x+\frac\pi2\Bigr), \dotsc,\cos\Bigl(x+(n-k+1)\frac\pi2\Bigr)\Bigr)
\end{multline*}
and
\begin{multline*}
(\cot x)^{(n-1)}=(\ln\sin x)^{(n)}\\*
=\sum_{k=1}^n\frac{(-1)^{k-1}(k-1)!} {\sin^kx} \bell_{n,k}\Bigl(\sin\Bigl(x+\frac\pi2\Bigr), \dotsc,\sin\Bigl(x+(n-k+1)\frac\pi2\Bigr)\Bigr).
\end{multline*}
It is possible that, by comparing and equating these derivatives with the formulas~\eqref{tan-deriv-unity} and~\eqref{cotan-deriv-n}, we may discover explicit expressions for Bell polynomials
\begin{equation*}
\bell_{n,k}\Bigl(\cos\Bigl(x+\frac\pi2\Bigr),\cos\Bigl(x+2\frac\pi2\Bigr), \dotsc,\cos\Bigl(x+(n-k+1)\frac\pi2\Bigr)\Bigr)
\end{equation*}
and
\begin{equation*}
\bell_{n,k}\Bigl(\sin\Bigl(x+\frac\pi2\Bigr),\sin\Bigl(x+2\frac\pi2\Bigr), \dotsc,\sin\Bigl(x+(n-k+1)\frac\pi2\Bigr)\Bigr).
\end{equation*}
These results may be applied to procure explicit formulas for the $n$-th derivatives of the functions $e^{\pm\sin x}$ and $e^{\pm\cos x}$.
\par
Utilizing Fa\'a di Bruno formula~\eqref{Bruno-Bell-Polynomial} and the formulas~\eqref{sin-cos-n-deriv} and~\eqref{Bell(n-n)}, we obtain
\begin{align*}
[\sin(e^{\pm x})]^{(n)}&=\sum_{k=1}^n\sin^{(k)}(e^{\pm x}) \bell_{n,k}\bigl((\pm1)e^{\pm x}, (\pm1)^2e^{\pm x},\dotsc, (\pm1)^{n-k+1}e^{\pm x}\bigr)\\
&=(\pm1)^n\sum_{k=1}^n\sin\Bigl(e^{\pm x}+\frac\pi2k\Bigr)e^{\pm kx} \bell_{n,k}(\overbrace{1,\dotsc, 1}^{n-k+1})\\
&=(\pm1)^n\sum_{k=1}^nS(n,k)\sin\Bigl(e^{\pm x}+\frac\pi2k\Bigr)e^{\pm kx}
\end{align*}
and
\begin{equation*}
[\cos(e^{\pm x})]^{(n)}=(\pm1)^n\sum_{k=1}^nS(n,k)\cos\Bigl(e^{\pm x}+\frac\pi2k\Bigr)e^{\pm kx},
\end{equation*}
where
\begin{equation}
\bell_{n,k}(\overbrace{1,\dotsc, 1}^{n-k+1})=S(n,k)
\end{equation}
may be found in~\cite[p.~135]{Comtet-Combinatorics-74} and
\begin{equation}\label{Stirling-Number-dfn}
S(n,k)=\frac1{k!}\sum_{\ell=0}^k(-1)^{k-\ell}\binom{k}{\ell}\ell^{n}
\end{equation}
is called Stirling number of the second kind which may be combinatorially interpreted as the number of partitions of the set $\{1,2,\dotsc,n\}$ into $k$ non-empty disjoint sets. For more information on Stirling numbers of the second kind $S(n,k)$, please refer to~\cite{Comtet-Combinatorics-74, BernPolRep.tex, Bernoulli-Stirling2-3P.tex, ANLY-D-12-1238.tex, exp-derivative-sum-Combined.tex, SirlingTo-Recurr-Ext.tex, Bell-Stirling-HyperGeom-JIS.tex, Bernoulli-No-Int-New.tex, Bell-Stirling-Lah-simp.tex, Eight-Identy-More.tex} and closely related references therein.

\begin{rem}
This paper is a slightly revised version of the preprint~\cite{Derivatives-arcsine-arccosine.tex}.
\end{rem}

\end{document}